\newcommand{\Cliff}{\mathrm{Cliff}}
\newcommand{\bR}{\mathbb{R}}
\newcommand{\bN}{\mathbb{N}}
\newcommand{\Z}{\mathbb{Z}}
\newcommand{\C}{\mathbb{C}}
\newcommand{\cB}{\mathcal{B}}
\newcommand{\cC}{\mathcal{C}}
\newcommand{\cK}{\mathcal{K}}
\newcommand{\cH}{\mathcal{H}}
\newcommand{\cS}{\mathcal{S}}
\newcommand{\cU}{\mathcal{U}}
\newcommand{\hodot}{\widehat{\odot}}
\newcommand{\fS}{\mathfrak{S}}
\newcommand{\hato}{\widehat{\otimes}}
\newtheorem{theorem}{Theorem}[section]
\newtheorem{lemma}[theorem]{Lemma}
\newtheorem{proposition}[theorem]{Proposition}
\newtheorem*{corollary*}{Corollary}
\theoremstyle{definition}
\newtheorem{definition}[theorem]{Definition}
\theoremstyle{remark}
\newtheorem{example}[theorem]{Example}
\newtheorem{remark}[theorem]{Remark}
\title{On the Baum-Connes conjecture for $D_{\infty}$}
\author[1]{Eugenia Ellis}
\author[2]{Emanuel Rodr\'iguez Cirone}
\author[3]{Gisela Tartaglia}
\thanks{The first author was partially supported by ANII, CSIC and PEDECIBA.  The first and third authors were partially supported by grant PICT-2021-I-A-0 0710. The third author was partially supported by grant PIP 11220200100423CO}
\address[1]{IMERL, Fac. de Ingenier\'ia, Universidad de la República, Montevideo, Uruguay}
\address[2]{Área de Matemática - CBC - UBA, Buenos Aires, Argentina}
\address[3]{CMaLP - CONICET, FCE-UNLP, La Plata, Argentina}
\email[1]{eellis@fing.edu.uy}
\email[2]{erodriguezcirone@cbc.uba.ar}
\email[3]{gtartaglia@mate.unlp.edu.ar}
\date{}
\keywords{operator $K$-theory, Baum-Connes conjecture, equivariant $E$-theory}
\subjclass{46L80, 19K35}
\begin{document}
\begin{abstract}
         We make an exposition of the proof of the Baum-Connes conjecture for the infinite dihedral group following the ideas of Higson and Kasparov.
\end{abstract}

\maketitle

\section{Introduction}

The $K$-theory of $C^*$-algebras is a growing area of research with applications to index theory, noncommutative geometry and classification of $C^*$-algebras.
A useful tool for computing the $K$-theory of group $C^*$-algebras is the Baum-Connes conjecture, introduced in \cite{bch} using Kasparov's $KK$-theory. For a countable discrete group $G$ and a $G$-$C^*$-algebra $A$, the conjecture states that certain assembly map
\[\mu_r:K^{top}(G,A)\to K(C_r^*(G,A))\]
is an isomorphism, where the left hand side is defined using $KK$-groups.
A different assembly map was formulated by Davis and L\"uck \cite{dl} replacing the left hand side with a homotopy theoretic construction. Both assembly maps were shown to be equivalent in \cite{kranz}. The conjecture was proved for a large class of groups though it is known not to hold for general $G$ and $A$; see \cite{hls}. For example, the conjecture was proved for a-T-menable groups \cite{HK}, hyperbolic groups \cite{laf} and one-relator groups \cite{bba}. For a comprehensive and up-to-date exposition on the Baum-Connes conjecture, we refer the reader to the survey \cite{GAJV}.

One of the main contributions regarding the Baum-Connes conjecture is the work of Higson and Kasparov, who proved in \cite{HK} that the conjecture holds for a-T-menable groups (i.e. groups that act affine, properly and isometrically on a Hilbert space). In their work, Higson and Kasparov used asymptotic morphisms and described the left hand side of the assembly map in terms of $G$-equivariant $E$-theory groups \cite{GHT}. The paper \cite{HK} is technically involved and its main ideas are carefully explained in \cite{libro-higson-connes}, where the conjecture for finite groups and for $\Z^n$ is discussed before approaching the general case. In this work we go one step further in the list of examples and expose the proof of the conjecture for the infinite dihedral group $D_\infty$. Following the arguments given in \cite{libro-higson-connes}, we use the fact that $D_\infty$ acts on a finite-dimensional Euclidean space to do some calculations explicitly and to avoid part of the technical machinery of \cite{HK}.

\subsection*{\it Acknowledgements} The authors wish to thank the organizers of the workshop {\it Matemáticas en el Cono Sur 2}, where this project was initiated, and Natalia Pacheco-Tallaj for having participated in the project in its early stages. The last two authors also thank Eugenia Ellis for her hospitality and support during their visits to the IMERL--UdelaR in Montevideo.

\section{Preliminaries}\label{sec:preliminaries}

In this section we gather from \cite{libro-higson-connes} relevant definitions and results needed to state and prove the Baum-Connes conjecture with coefficients for the group $D_\infty$.

\subsection{Graded \texorpdfstring{$G$-$C^*$-algebras}{G-C*-algebras}}
Let $A$ be a $C^*$-algebra. A \emph{grading} on $A$ is given by a $*$-homomorphism $\alpha:A\to A$ such that $\alpha^2=1$. Equivalently, a grading is given by two $*$-subspaces $A_0$ and $A_1$ satisfying $A=A_0\oplus A_1$, and $A_iA_j\subseteq A_{i+j}$ (mod 2). Elements of $A_0$ (those $a\in A$ such that $\alpha(a)=a$) are called even graded, and elements of $A_1$ (those $a\in A$ such that $\alpha(a)=-a$) are called odd graded. Throughout this article, all $C^*$-algebras will be graded.

\begin{example}
    Let $\cS$ denote the $C^*$-algebra $C_0(\bR)$ of continuous, complex valued functions on $\bR$ that vanish at infinity, with grading operator given by $f(x)\mapsto f(-x)$. This grading induces the decomposition $\cS = \left\{ \text{even functions} \right\} \oplus \left\{ \text{odd functions} \right\}$.
\end{example}

\begin{example}
    A graded Hilbert space is a Hilbert space $\cH$
equipped with an orthogonal decomposition $\cH=\cH_0\oplus\cH_1$. This grading induces a grading on the $C^*$-algebra $\cB(\cH)$ of bounded operators on $\cH$ as follows. First note that every $T\in \cB(\cH)$ can be identified with a 2 by 2 matrix, then declare the diagonal matrices to be even, and the off-diagonal ones to be odd.
\end{example}

Throughout this paper, $G$ is a countable discrete group. A \emph{graded $G$-$C^*$-algebra} is a $C^*$-algebra $A$ equipped with an action of $G$ by grading-preserving $*$-automorphisms.

\subsection{Maximal tensor product}
Let $A$ and $B$ be graded $C^*$-algebras and let $A\hodot B$ be the algebraic tensor product of the underlying vector spaces. Endow $A\hodot B$ with the multiplication, involution and grading given by the following formulas on elementary tensors of homogeneous elements:
\begin{align*}
    (a_1\hodot b_1)(a_2\hodot b_2)&=(-1)^{\partial b_1\partial a_2}a_1a_2\hodot b_1b_2 \\
    (a\hodot b)^*&=(-1)^{\partial a\partial b}a^*\hodot b^* \\
    \partial (a\hodot b)&=\partial a+\partial b\quad\text{(mod $2$)}
\end{align*}
Here, $\partial a=0$ for $a\in A_0$ and $\partial a=1$ for $a\in A_1$. The \emph{maximal graded tensor product} $A\hato B$ is the completion of $A\hodot B$ with respect to the maximal norm; see for example \cite{libro-higson-connes}*{Def. 1.9}. It has the following universal property: if $C$ is a graded $C^*$-algebra and if $f:A\to C$ and $g:B\to C$ are graded $*$-homomorphisms whose images graded-commute, then there exists a unique graded $*$-homomorphism $A\hato B\to C$ that maps $a\hato b$ to $f(a)g(b)$. Moreover, if $f:A\to C$ and $g:B\to D$ are graded $*$-homomorphisms, then there exists a unique graded $*$-homomorphism $A\hato B\to C\hato D$ that maps $a\hato b$ to $f(a)\hato g(b)$.

\subsection{Crossed products}
Let $A$ be a graded $G$-$C^*$-algebra and let $C_c(G,A)$ be the linear space of finitely supported, $A$-valued functions on $G$. Then $C_c(G,A)$ is a graded involutive algebra with convolution multiplication. The involution is defined by $f^*(g)=g\cdot (f(g^{-1})^*)$ for $f\in C_c(G,A)$ and $g\in G$ and the grading automorphism acts pointwise. The \emph{full crossed product graded $C^*$-algebra} $C^*(G,A)$ is the completion of $C_c(G,A)$ in the smallest $C^*$-norm that makes all the covariant representations continuous; see \cite{libro-higson-connes}*{Def. 2.19}. The \emph{reduced crossed product graded $C^*$-algebra} $C_r^*(G,A)$ is the image of $C^*(G,A)$ in the regular representation on $\ell^2(G,A)$; see \cite{libro-higson-connes}*{Def. 2.21}. It is well known that $C^*(G,A)=C^*_r(G,A)$ if $G$ is amenable; see for example \cite{dana}*{Sec. 7.2}.

\subsection{Asymptotic morphisms} Let $A$ and $B$ be graded $C^*$ -algebras. An \emph{asymptotic morphism} $\varphi:A\dashrightarrow B$ is a family of functions $\{\varphi_t:A\to B\}_{t\geq 1}$ such that the function $[1,\infty)\to B$, $t\mapsto \varphi_t(a)$, is continuous and bounded for every $a\in A$, and such that the following asymptotic conditions are satisfied:
\begin{align*}
            \varphi_t(a_1a_2)-\varphi_t(a_1)\varphi_t(a_2) &\xrightarrow[t\to\infty]{} 0\cr
\varphi_t(a_1+a_2)- \varphi_t(a_1)-\varphi_t(a_2) &\xrightarrow[t\to\infty]{} 0\cr
\varphi_t(\lambda a_1)-\lambda\varphi_t(a_1)&\xrightarrow[t\to\infty]{} 0\cr
\varphi_t(a_1^*)- \varphi_t(a_1)^*&\xrightarrow[t\to\infty]{} 0\cr
\alpha_B(\varphi_t(a_1))-\varphi_t(\alpha_A(a_1))&\xrightarrow[t\to\infty]{} 0
\end{align*}
Here, $a_1,a_2\in A$, $\lambda\in \C$, and $\alpha_A, \alpha_B$ are the grading morphisms of $A$ and $B$ respectively.
Note that any graded $*$-homomorphism $\phi:A \to B$ determines a constant asymptotic morphism $\phi_t=\phi$ for all $t \geq 1$.

If $A$ and $B$ are graded $G$-$C^*$-algebras, an \emph{equivariant asymptotic morphism} $\varphi:A\dashrightarrow B$ is an asymptotic morphism $\varphi$ such that $\varphi_t(g\cdot a)-g\cdot \varphi_t(a)\to 0$ as $t\mapsto \infty$, for all $a\in A$ and all $g\in G$.

Two (equivariant) asymptotic morphisms $\varphi^0,\varphi^1: A\dashrightarrow B$ are called:
\begin{itemize}
    \item \emph{asymptotically equivalent} if
         $\lim_{t\to\infty}\lVert \varphi^0_t(a)-\varphi^1_t(a)\rVert_B= 0$
        for every $a\in A$;
    \item \emph{homotopy equivalent} if there exists an asymptotic morphism $\varphi:A\dashrightarrow C([0,1],B)$ such that $\varphi(a)(0)=\varphi^0(a)$ and $\varphi(a)(1)=\varphi^1(a)$ for every $a\in A$.
\end{itemize}
We write $\llbracket A, B\rrbracket$ for the set of homotopy classes of asymptotic morphisms from $A$ to $B$. If $A$ and $B$ are graded $G$-$C^*$-algebras, we write $\llbracket A,B\rrbracket^G$ for the set of homotopy classes of equivariant asymptotic morphisms from $A$ to $B$.

\subsection{\texorpdfstring{E-Theory}{$E$-Theory} groups}

Let $A$ and $B$ be separable graded $C^*$-algebras. Put
\[E(A,B):=\llbracket\cS\hato A\hato \cK(\cH), B\hato\cK(\cH)\rrbracket\]
where $\cK(\cH)\subset \cB(\cH)$ is the subalgebra of compact operators.
These sets $E(A,B)$ equipped with the sum induced by the direct sum of asymptotic morphisms are indeed abelian groups, see \cite{libro-higson-connes}*{Lemma 2.1}. They depend contravariantly on $A$ and covariantly on $B$ with respect to graded $*$-homomorphisms. There exists a bilinear composition law $E(A,B)\otimes E(B,C)\to E(A,C)$ that makes the groups $E(A,B)$ into the hom-sets of an additive category whose objects are separable $C^*$-algebras. Moreover, we can recover $K$-theory groups from $E$-theory since we have $E(\C,A)\cong K(A)$ for any separable $C^*$-algebra $A$, where $K$ stands for the $K$-theory of graded $C^*$-algebras.

To define equivariant $E$-theory groups, let $\cH_G$ be the infinite Hilbert space direct sum:
\[\cH_G=\bigoplus_{n=0}^{\infty}\ell^2(G)\]
This Hilbert space is equipped with the regular representation of $G$ on each summand and graded so that the even numbered summands are even and the odd numbered summands are odd. For graded separable $G$-$C^*$-algebras $A$ and $B$ put:
\[E_G(A,B):=\llbracket \cS\hato A\hato \cK(\cH_G),B\hato \cK(\cH_G) \rrbracket^G\]
These sets $E_G(A,B)$ are the hom-sets of an additive category whose objects are the graded separable $G$-$C^*$-algebras.
There is a descent functor from the $G$-equivariant $E$-theory category to the $E$-theory category that sends a $G$-$C^*$-algebra $A$ to the maximal crossed product $C^*(G,A)$; see \cite{libro-higson-connes}*{Theorem 2.13}.

The following definition will be useful later on.

\begin{definition}
    Let $\cU$ be a separable, $\Z_2$-graded Hilbert space equipped with a family of unitary $G$-actions parametrized by $t\in [1,\infty)$. This family induces a family of actions on $\cB(\cU)$ by conjugation:
    \[(g\cdot_tT)(u)= g\cdot_t(T(g^{-1}\cdot_t u)).\]
    We call the family a \emph{continuous family of $G$-actions} if for every $g\in G$ and every $T\in \cK(\cU)$, the map $t\mapsto g\cdot_t T$ is norm continuous in $t$. Suppose now that $A$ and $B$ are $G$-$C^*$-algebras, and $\phi:\cS\hato A \dashrightarrow B\hato\cK(\cU)$ is an asymptotic morphism. We say that $\phi$ is \emph{equivariant with respect to the given family of $G$-actions} if
    \[ \lim_{t\to\infty} \lVert \phi_t(g\cdot x)- g\cdot_t (\phi_t(x))\rVert = 0.\]
\end{definition}

\begin{remark}
    By \cite{libro-higson-connes}*{Remark 2.6}, if $\phi:\cS\hato A \dashrightarrow B\hato\cK(\cU)$ is equivariant with respect to a continuous family of $G$-actions, then $\phi$ determines a class in $E_G(A,B)$.
\end{remark}

\subsection{The Baum-Connes assembly map}Let $D$ be a separable $G$-$C^*$-algebra. The \emph{topological $K$-theory} of $G$ with coefficients in $D$ is defined by
$$ K^{top}(G,D)=\displaystyle\lim_{\rightarrow} E_G(C_0(X),D)$$
where the limit is taken over the collection of $G$-invariant and $G$-compact subspaces $X$ of the universal proper $G$-space $\mathcal{E}G$ (\cite{libro-higson-connes}*{Section 2.12}). 

The (full) \emph{Baum-Connes assembly map} with coefficients in $D$ is the map

\begin{equation}\label{assembly}
\mu: K^{top}(G,D)\to K(C^*(G,D))
\end{equation}
which is obtained as a limit of compositions
$$E_G(C_0(X),D)\xrightarrow{desc} E(C^*(G,C_0(X)),C^*(G,D))\xrightarrow{[p]}E(\C,C^*(G,D))$$
 of the descent homomorphism and the homomorphism induced by the class of a projection associated to a cutoff function (for details see \cite{libro-higson-connes}*{Section 2.14}).
Composing the assembly $\mu$ with the map from $K(C^*(G,D))$ to $K(C^*_r(G,D))$ induced by the surjective homomorphism $C^*(G,D)\rightarrow C^*_r(G,D)$, one obtains the (reduced) \emph{Baum-Connes assembly map} with coefficients in D:
\begin{equation*}
\mu_r: K^{top}(G,D)\to K(C^*_r(G,D)).
\end{equation*}

The \emph{Baum-Connes conjecture} (with coefficients) asserts that the assembly map $\mu_r$ is an isomorphism for every separable $G$-$C^*$-algebra $D$.

Note that for finite $G$ the conjecture is true, as it is equivalent to a well-known result of Green and Julg (see \cite{Green} and \cite{Julg}). An important tool for the study of the conjecture is the notion of proper algebra:

A $G$-$C^*$-algebra $B$ is called \emph{proper} if there exists a locally compact proper $G$-space $Z$, and an equivariant $*$-homomorphism $\phi$ from $C_0(Z)$ into the grading-degree zero part of the center of the multiplier algebra of $B$, such that $\phi(C_0(Z))\cdot B$ is norm-dense in $B$.

For a proper $G$-$C^*$-algebra $B$ the full and reduced crossed product $C^*(G,B)$ and $C^*_r(G,B)$ coincide, and the Baum-Connes conjecture is true (\cite{GHT}*{Thm.13.1}).

For general coefficient algebras the following theorem provides a strategy for studying the conjecture:

\begin{theorem}\cite{libro-higson-connes}*{Thm. 2.20}
    Let $G$ be a countable discrete group. Suppose there exists a proper $G$-$C^*$-algebra $B$ and elements $\beta \in E_G(\C,B)$ and $\alpha \in E_G(B,\C)$ such that 
    $$\alpha\circ\beta\in E(\C,\C).$$
    Then the Baum-Connes assembly map $\mu: K^{top}(G,D)\to K(C^*(G,D))$ is an isomorphism for every separable $G$-$C^*$-algebra $D$.
\end{theorem}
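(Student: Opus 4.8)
The plan is to run the Dirac--dual-Dirac (retract) argument: for every separable $G$-$C^*$-algebra $D$ I will use $\beta$ and $\alpha$ to exhibit the assembly map $\mu_D$ as a retract of the assembly map $\mu_{B\hato D}$ attached to the \emph{proper} coefficient algebra $B\hato D$, and then invoke the known validity of the conjecture for proper algebras to turn this retract into an isomorphism.

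First I would record the functoriality that drives the argument. Both $D\mapsto K^{top}(G,D)$ and $D\mapsto K(C^*(G,D))$ are functors on the equivariant $E$-theory category $E_G$: the first by composition inside $\varinjlim_X E_G(C_0(X),D)$, and the second by applying the descent functor $E_G(D,D')\to E(C^*(G,D),C^*(G,D'))$ and post-composing on $K(C^*(G,D))=E(\C,C^*(G,D))$. Moreover the assembly map $\mu$ is a natural transformation between these two functors, since descent is functorial and the cutoff projection lives in the $C_0(X)$-variable, independent of $D$. Granting this, I tensor with $\mathrm{id}_D$ to turn $\beta\in E_G(\C,B)$ and $\alpha\in E_G(B,\C)$ into morphisms $\beta_D:=\beta\hato\mathrm{id}_D\in E_G(D,B\hato D)$ and $\alpha_D:=\alpha\hato\mathrm{id}_D\in E_G(B\hato D,D)$, whose composite is $\alpha_D\circ\beta_D=(\alpha\circ\beta)\hato\mathrm{id}_D\in E_G(D,D)$.

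Next comes the properness input. Since $B$ is proper and the maximal graded tensor product of a proper $G$-$C^*$-algebra with an arbitrary $G$-$C^*$-algebra is again proper, $B\hato D$ is proper; hence by the quoted result ($\mathrm{GHT}$, Thm.~13.1) the assembly map $\mu_{B\hato D}$ is an isomorphism. Naturality of $\mu$ along $\beta_D$ and $\alpha_D$ yields two commuting squares, namely $\mu_{B\hato D}\circ(\beta_D)_*=(\beta_D)_*\circ\mu_D$ and $\mu_D\circ(\alpha_D)_*=(\alpha_D)_*\circ\mu_{B\hato D}$ (the subscript $*$ denoting the induced map on the appropriate functor). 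Setting $\rho:=(\alpha_D)_*\circ\mu_{B\hato D}^{-1}\circ(\beta_D)_*\colon K(C^*(G,D))\to K^{top}(G,D)$, a short diagram chase using these squares gives $\rho\circ\mu_D=((\alpha\circ\beta)\hato\mathrm{id}_D)_*$ on $K^{top}(G,D)$ and $\mu_D\circ\rho=((\alpha\circ\beta)\hato\mathrm{id}_D)_*$ on $K(C^*(G,D))$.

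The crux is to identify these two round-trip endomorphisms, and this is exactly where the hypothesis $\alpha\circ\beta\in E(\C,\C)$ is used; I would be careful here rather than silently treating $\alpha\circ\beta$ as the identity of $E_G(\C,\C)$. The point is that a class inflated from the non-equivariant theory acts on the $E_G(\C,\C)$-modules $K^{top}(G,D)$ and $K(C^*(G,D))$ through the subring $E(\C,\C)\cong\Z$, and that $\mu_D$ intertwines these two actions; hence both round trips are multiplication by one and the same class $c\in E(\C,\C)$ coming from $\alpha\circ\beta$. To conclude that $\rho$ is a two-sided inverse of $\mu_D$, and thus that $\mu_D$ is an isomorphism, one needs this common scalar to act invertibly, which holds because $\alpha\circ\beta$ is the image of the unit $1\in E(\C,\C)$. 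I expect this final identification to be the main obstacle: one must pin down the $E_G(\C,\C)$-module structures on both sides, verify that a class in the image of $E(\C,\C)\to E_G(\C,\C)$ acts via the corresponding element of $E(\C,\C)$ compatibly with $\mu_D$ on \emph{both} functors, and check the normalization ensuring invertibility; establishing the naturality of $\mu$ over $E_G$ and the genuine properness of $B\hato D$ (so that $\mu_{B\hato D}$ is invertible) are the other points that will require care.
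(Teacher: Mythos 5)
Your proposal is correct and is essentially the argument the paper itself gives: the paper's proof consists precisely of your retract diagram (the two naturality squares for $\beta\hato\mathrm{id}_D$ and $\alpha\hato\mathrm{id}_D$, with the middle assembly map $\mu_{B\hato D}$ invertible) together with the two facts you invoke, namely that $B\hato D$ is again proper and that the assembly map is an isomorphism for proper coefficient algebras \cite{GHT}*{Thm. 13.1}. Your careful handling of the final step --- insisting that $\alpha\circ\beta$ be the inflation of the unit $1\in E(\C,\C)$, not merely some class in the image of $E(\C,\C)\to E_G(\C,\C)$, so that the round-trip endomorphisms act invertibly on both sides --- is exactly the reading the paper relies on, since in the application the authors verify $\alpha\circ\beta=1\in E_{D_\infty}(\C,\C)$.
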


The proof of this result is based on the commutativity of the diagram 
\[\xymatrix{K^{top}(G,\C\hato D)\ar[r]^{\mu}\ar[d]_{\beta_*}& K(C^*(G,\C\hato D))\ar[d]^{\beta_*}\\K^{top}(G,B\hato D)\ar[r]^{\mu}_{\cong}\ar[d]_{\alpha_*}& K(C^*(G,B\hato D))\ar[d]^{\alpha_*}\\K^{top}(G,\C\hato D)\ar[r]^{\mu}& K(C^*(G,\C\hato D))\\}\]
and the fact that for a proper $G$-$C^*$-algebra $B$, $B\hato D$ is again proper for every separable $G$-$C^*$-algebra $D$.


\section{The Baum-Connes conjecture for \texorpdfstring{$D_\infty$}{the infinite dihedral group}}

In this section we revisit the proof the Baum-Connes conjecture given in  \cite{HK} for the group $D_\infty
=\langle \rho, \sigma \mid \sigma^2=1,\  \sigma\rho\sigma=\rho^{-1}\rangle$. Note that $D_{\infty}$ is an amenable group, since it is elementary amenable, and this implies $C^*(D_{\infty},A)=C^*_r(D_{\infty},A)$ for every $D_{\infty}$-$C^*$-algebra $A$. We start by constructing a proper $D_\infty$-$C^*$-algebra $\cC(\bR)$.

The group $D_\infty$ acts on $\bR$ on the left by $\sigma\cdot x= -x$ and $\rho\cdot x=x-1$. This action is affine and metrically proper. The latter means that for every $R>0$ and for every $x\in \bR$, there are only finitely many $g\in D_\infty$ with $|x-g\cdot x|\leq R$ (note that $|x-\rho^l\sigma\cdot x|\geq ||l|-|2x||$, $\forall l\in\Z$).

Let $\operatorname{Cliff}(\bR)$ be the complexified Clifford algebra of $\bR$. It can be identified with the unital $\Z_2$-graded $C^*$-algebra $\C\oplus\C$. The homogeneous elements of grading-degree one are those of the form $(z,-z)$, and the grading-degree zero ones are of the form $(w,w)$. 

The morphism $\pi:D_\infty\to \{\pm 1\}$ determined by $\pi(\rho)=1$ and $\pi(\sigma)=-1$ is an orthogonal representation of $D_\infty$ that induces an action of $D_\infty$ on $\Cliff(\bR)$. If we write $\Cliff(\bR)=\C\oplus\C$ we have $\pi(\rho)(z,w)=(z,w)$ and $\pi(\sigma)(z,w)=(w,z)$.
We will write $\cC(\bR)$ for the $C^*$-algebra $C_0(\bR,\operatorname{Cliff}(\bR))=C_0(\bR)\oplus C_0(\bR)$, which is a $D_\infty$-algebra with the action given by:
$$(g\cdot h)(x)=\pi(g)( h(g^{-1}\cdot x)), \hspace{0.2cm}\forall g \in D_\infty, h\in \cC(\bR),x\in \bR.$$

\begin{lemma}
    $\cC(\bR)$ is a proper $D_\infty$-$C^*$-algebra.
\end{lemma}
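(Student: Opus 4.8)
The plan is to verify the definition of properness directly, taking the underlying proper $G$-space to be $\bR$ itself (with the given affine action) and the structural $*$-homomorphism to be the diagonal unital inclusion of $C_0(\bR)$. First I would record that $\bR$, with the action $\sigma\cdot x=-x$ and $\rho\cdot x=x-1$, is a proper $D_\infty$-space. Both generators act as isometries of $\bR$, so the metric properness already noted above upgrades to properness as a topological $G$-space: if $K\subseteq\bR$ is compact, say $K\subseteq[x_0-R,x_0+R]$, and $g\in D_\infty$ satisfies $g\cdot K\cap K\neq\emptyset$, picking $z\in K$ with $g\cdot z\in K$ gives $|x_0-g\cdot x_0|\leq |x_0-g\cdot z|+|g\cdot z-g\cdot x_0|\leq 2R$, and metric properness leaves only finitely many such $g$.

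Next I would identify the multiplier algebra. Since $\Cliff(\bR)\cong\C\oplus\C$ is unital and \emph{commutative}, the algebra $\cC(\bR)=C_0(\bR,\Cliff(\bR))$ is commutative, with multiplier algebra $M(\cC(\bR))=C_b(\bR,\Cliff(\bR))$ equal to its own centre. Its grading-degree zero part is $C_b(\bR,\C\cdot 1)$, the bounded continuous functions valued in the diagonal line $\{(w,w):w\in\C\}=\C\cdot 1_{\Cliff(\bR)}$, which is exactly the degree-zero part of $\Cliff(\bR)$. I then define
\[
\phi\colon C_0(\bR)\longrightarrow M(\cC(\bR)),\qquad \phi(f)(x)=f(x)\,1_{\Cliff(\bR)}=(f(x),f(x)),
\]
which by the above is a $*$-homomorphism into the grading-degree zero part of the centre of $M(\cC(\bR))$.

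For equivariance, recall the action on $C_0(\bR)$ is $(g\cdot f)(x)=f(g^{-1}\cdot x)$ and the (extended) action on $M(\cC(\bR))$ is $(g\cdot F)(x)=\pi(g)(F(g^{-1}\cdot x))$. The key point is that each $\pi(g)$ is a unital automorphism of $\Cliff(\bR)$ and hence fixes $1_{\Cliff(\bR)}$; explicitly $\pi(\sigma)(1,1)=(1,1)$ and $\pi(\rho)=\mathrm{id}$. Therefore $(g\cdot\phi(f))(x)=f(g^{-1}\cdot x)\,\pi(g)(1_{\Cliff(\bR)})=f(g^{-1}\cdot x)\,1_{\Cliff(\bR)}=\phi(g\cdot f)(x)$, so $\phi$ is automatically equivariant. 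Finally, for density one computes $\phi(f)\cdot h=(f\cdot h_1,\,f\cdot h_2)$ for $h=(h_1,h_2)\in\cC(\bR)$; since products $C_0(\bR)\cdot C_0(\bR)$ are dense in $C_0(\bR)$ (use an approximate unit), $\phi(C_0(\bR))\cdot\cC(\bR)$ is norm-dense in $\cC(\bR)$, completing the three requirements of the definition.

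I do not expect a serious obstacle here: the computation is smooth precisely because $\Cliff(\bR)$ is commutative and unital, which makes the centre the whole multiplier algebra and makes equivariance of the unit inclusion automatic. The two points that genuinely deserve care are the passage from the metric properness of the action on $\bR$ to properness as a topological $G$-space (handled above via the isometry estimate), and the correct bookkeeping of the grading on $M(\cC(\bR))$, ensuring that $\phi$ really lands in the degree-zero part of the centre — which holds since $1_{\Cliff(\bR)}=(1,1)$ is homogeneous of degree zero.
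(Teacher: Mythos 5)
Your proof is correct and follows essentially the same route as the paper: the same proper $D_\infty$-space $Z=\bR$, the same diagonal $*$-homomorphism $\phi(f)=(f,f)$ into the multiplier algebra $C_b(\bR)\oplus C_b(\bR)$, the same equivariance computation resting on the fact that every $\pi(g)$ fixes diagonal elements, and the same approximate-unit argument for density. If anything, your verification that the action on $\bR$ is topologically proper (isometries plus metric properness via the triangle inequality) is more complete than the paper's sketch, which only treats the translations $\rho^l$ explicitly.
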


\begin{proof}
    We will consider the locally compact space $\bR$. The action of $D_\infty$ on $\bR$ is proper iff for every compact subset $K$ of $\bR$, the set $\left\{ g\in D_\infty \mid g\cdot K \cap K\neq\emptyset\right\}$ is finite. Given a compact subset $K\subseteq \bR$, there exists $N\in\bN$ such that $K\subseteq [-N,N]$, and for every $l\geq 2N$, and every $x\in K$ we have $|\rho^l\cdot x|> N$.   The action of $D_\infty$ on $C_0(\bR)$ is given by:
    $$(g\cdot f)(x)=f(g^{-1}\cdot x).$$ 
    The multiplier algebra of $\cC(\bR)$ is the commutative algebra $C_b(\bR)\oplus C_b(\bR)$.
    Let $\phi:C_0(\bR)\to C_b(\bR)\oplus C_b(\bR)$ be the $*$-homomorphism given by $$\phi(f)(x)=(f(x), f(x)).$$
Note that $\phi$ is $D_\infty$-equivariant:
    \begin{align*}
     \phi(g\cdot f)(x)=&((g\cdot f)(x), (g\cdot f)(x)) \cr
     =&(f(g^{-1}\cdot x),  f(g^{-1}\cdot x))\cr
     =& \pi(g)(f(g^{-1}\cdot x),  f(g^{-1}\cdot x))\cr
     =&(g\cdot \phi(f))(x).
    \end{align*}
To see that $\phi(C_0(\bR))\cdot \cC(\bR)$ is norm-dense in $\cC(\bR)$, use the fact that $C_0(\bR)$ has an approximate unit.
\end{proof}

As explained in the previous section, to show that the assembly map \eqref{assembly} is an isomorphism it is enough to construct elements $\beta \in E_{D_{\infty}}(\C,\cC(\bR))$ and $\alpha \in E_{D_{\infty}}(\cC(\bR),\C)$ satisfying $\alpha\circ\beta=1 \in E_{D_{\infty}}(\C,\C)$.

\subsection{The element \texorpdfstring{$\beta$}{beta}}
Let $C:\bR\to\Cliff(\bR)=\C\oplus\C$ be the inclusion given by $C(x)=(x,-x)$. Since $C(x)$ is a self-adjoint element of $\Cliff(\bR)$ it makes sense to consider its continuous functional calculus. Recall the definition of the $C^*$-algebra $\cS$ from Section \ref{sec:preliminaries} and endow it with the trivial action of $D_\infty$.
For $t\geq 1$, let $\beta_t:\cS\to\cC(\bR)$ be the $*$-homomorphism given by
\[\beta_t(f)(x)=f(t^{-1}C(x))=(f(t^{-1}x), f(-t^{-1}x)).\]

\begin{lemma}\label{lem:betaequiv}
    The asymptotic morphism $\beta=\{\beta_t\}_{t\geq 1}$ is asymptotically $D_\infty$-equivariant.
\end{lemma}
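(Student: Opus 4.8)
The plan is to unwind the definition of asymptotic $D_\infty$-equivariance directly. Because $\cS$ is endowed with the \emph{trivial} $D_\infty$-action, we have $g\cdot f = f$ for every $f\in\cS$ and every $g\in D_\infty$, so the expression $\beta_t(g\cdot f)-g\cdot\beta_t(f)$ collapses to $\beta_t(f)-g\cdot\beta_t(f)$. Thus the whole statement reduces to proving that $\lim_{t\to\infty}\lVert\beta_t(f)-g\cdot\beta_t(f)\rVert=0$ for each fixed $f\in\cS$ and each $g\in D_\infty$.

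First I would reduce the verification to the two generators $\rho$ and $\sigma$. Writing $b_t:=\beta_t(f)$ for brevity, the algebraic identity $b_t-g_1g_2\cdot b_t=(b_t-g_1\cdot b_t)+g_1\cdot(b_t-g_2\cdot b_t)$, combined with the fact that every $g\in D_\infty$ acts on $\cC(\bR)$ by an isometric $*$-automorphism, shows that the collection of group elements satisfying the desired limit is closed under multiplication (and, by a symmetric argument, under inverses). Since $\rho$ and $\sigma$ generate $D_\infty$, it therefore suffices to check the two generators.

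For $g=\sigma$ I expect an \emph{exact} equality rather than a mere asymptotic one. Using $\sigma^{-1}\cdot x=-x$ and the fact that $\pi(\sigma)$ swaps the two coordinates of $\Cliff(\bR)=\C\oplus\C$, a direct computation should give
\[(\sigma\cdot b_t)(x)=\pi(\sigma)\bigl(b_t(-x)\bigr)=\pi(\sigma)\bigl(f(-t^{-1}x),\,f(t^{-1}x)\bigr)=\bigl(f(t^{-1}x),\,f(-t^{-1}x)\bigr)=b_t(x).\]
In other words, the sign flip $x\mapsto -x$ produced by the action on $\bR$ is cancelled precisely by the Clifford swap $\pi(\sigma)$, so $\sigma\cdot b_t=b_t$ and there is nothing to estimate.

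The genuine analytic content sits in the case $g=\rho$, which I expect to be the main point. Here $\pi(\rho)=\mathrm{id}$ and $\rho^{-1}\cdot x=x+1$, so $(\rho\cdot b_t)(x)=b_t(x+1)=\bigl(f(t^{-1}(x+1)),\,f(-t^{-1}(x+1))\bigr)$. Taking the supremum norm on $\cC(\bR)=C_0(\bR)\oplus C_0(\bR)$ and substituting $u=t^{-1}x$ reduces each coordinate to a quantity of the form $\sup_{u\in\bR}\lvert f(u)-f(u\pm t^{-1})\rvert$, that is, to the modulus of continuity of $f$ evaluated at $t^{-1}$. The key fact is that every $f\in\cS=C_0(\bR)$ is uniformly continuous, so this modulus tends to $0$ as $t^{-1}\to 0$; hence $\lVert b_t-\rho\cdot b_t\rVert\to 0$ as $t\to\infty$. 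Assembling the reduction to generators with these two computations completes the argument.
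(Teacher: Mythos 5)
Your proposal is correct and follows essentially the same route as the paper: reduce to the generators $\rho$ and $\sigma$, observe that the Clifford swap $\pi(\sigma)$ exactly cancels the reflection so that $\sigma\cdot\beta_t(f)=\beta_t(f)$ on the nose, and handle $\rho$ by uniform continuity of $f$ (equivalently, of $F(x)=(f(x),f(-x))$), whose modulus of continuity at $t^{-1}$ tends to $0$. Your explicit subgroup argument for the reduction to generators, using the identity $b_t-g_1g_2\cdot b_t=(b_t-g_1\cdot b_t)+g_1\cdot(b_t-g_2\cdot b_t)$ and isometry of the action, just makes precise a step the paper leaves implicit.
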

\begin{proof}
    Since the action of $D_\infty$ on $\cS$ is trivial, we have to show that
  \begin{equation}\label{eq:betat}\lim_{t\to\infty}\lVert\beta_t(f)-g\cdot\beta_t(f)\rVert_{\cC(\bR)}=0\end{equation}
    for every $f\in \cS$ and $g\in D_\infty$. Let us begin with $g=\rho$. For $f\in\cS$ we have:
\begin{align*}
\lVert\beta_t(f)- \rho\cdot\beta_t(f)\rVert_{\cC(\bR)} &= \sup_{x\in\bR} \lVert\beta_t(f)(x)- \pi(\rho)(\beta_t(f)(\rho^{-1}\cdot x))\rVert_{\Cliff(\bR)}\cr
&= \sup_{x\in\bR} \lVert F(t^{-1}x)-F(t^{-1}(x+1)) \rVert_{\Cliff(\bR)}
\end{align*}
where $F\in\cC(\bR)$ is given by $F(x)=(f(x),f(-x))$.
Let $\epsilon > 0$. Since $F$ is uniformly continuous, then there exists $\delta > 0$ such that
$$ |t^{-1}|=|t^{-1}x-(t^{-1}(x+1))| < \delta \Rightarrow \lVert F(t^{-1}x)-F(t^{-1}(x+1)) \rVert < \epsilon$$
for every $x\in\bR$. This implies that
$$\lim_{t\to \infty} \lVert \beta_t(f)- \rho\cdot\beta_t(f)\rVert_{\cC(\bR)}=0. $$

For the case $g=\sigma$ we have:
\begin{align*}
\beta_t(f)(x)- (\sigma\cdot\beta_t(f))(x) &= \beta_t(f)(x)- \pi(\sigma)(\beta_t(f)(\sigma^{-1}\cdot x))\cr
&= (f(t^{-1}x),f(-t^{-1}x) )- \pi(\sigma)(f(-t^{-1}x),f(t^{-1}x) )\cr
&= 0
\end{align*}
This shows that $\lVert\beta_t(f)-\sigma\cdot\beta_t(f)\rVert_{\cC(\bR)}=0$.
Since $D_\infty$ is generated by $\rho$ and $\sigma$, it follows that \eqref{eq:betat} holds for all $g\in D_\infty$.
\end{proof}

\subsection{The element \texorpdfstring{$\alpha$}{alpha}}

Let $\cH$ be the graded Hilbert space $L^2(\bR, \operatorname{Cliff}(\bR))= L^2(\bR)\oplus L^2(\bR)$. The group $D_\infty$ acts on $\cH$ by
$$ (g\cdot F)(x)=\pi(g)(F(g^{-1}\cdot x))$$
for $g\in D_\infty$, $F\in\cH$ and $x\in\bR$.

Letting $\cB(\cH)$ be the $C^*$-algebra of bounded operators on $\cH$, we have an induced $D_\infty$-action on $\cB(\cH)$ given by
$$ (g\cdot T)(F)= g\cdot(T(g^{-1}\cdot F)),$$
for $g\in D_\infty$, $T\in\cB(\cH)$ and $F\in\cH$.

Let $\fS(\bR)\subseteq \cH$ be the space of Schwartz functions. The \emph{Dirac operator} $D$ is an unbounded operator on $\cH=L^2(\bR)\oplus L^2(\bR)$ with domain $\fS(\bR)$ defined by
\begin{align*}
    D(F_1,F_2)= &\left(\frac{dF_2}{dx},-\frac{dF_1}{dx}\right) 
\end{align*}
for all $F=(F_1,F_2) \in  \fS(\bR)$.
By \cite{libro-higson-connes}*{Lemma 1.8}, $D$ is essentially self-adjoint and we can apply functional calculus: $\forall f\in \cS$, $f(D) \in 
 \cB(\cH)$. Moreover, for $F\in \cC(\bR)$ we have $f(D)M_F \in \cK(\cH)$, where $M_F\in\cB(\cH)$ is the multiplication operator and $\cK(\cH)\subset\cB(\cH)$ is the subalgebra of compact operators.

For $F\in \cC(\bR)$ and $t\in [1,\infty)$, let $F_t \in \cC(\bR)$ be the function $F_t(x)=F(t^{-1}x)$.
By \cite{libro-higson-connes}*{Proposition 1.5}, there exists, up to equivalence, a unique asymptotic morphism $\alpha: \cS\hato \cC(\bR)\dashrightarrow \cK(\cH)$ defined as follows on the elementary tensors:
$$ \alpha_t(f\hato F)= f(t^{-1}D)M_{F_t}.$$


Let us define a continuous family of $D_\infty$-actions on $\bR$ as follows: for every $s\geq 0$ put $\rho\cdot_s x=x-s$ and $\sigma\cdot_s x =-x$.
The induced action on $\cH$ is
 $$(g\cdot_s F)(x)=\pi(g)( F(g^{-1}\cdot_s x)).$$
for $F\in\cH$ and $g\in D_\infty$. This defines a continuous family of $D_\infty$-actions on $\cK(\cH)$. 

 \begin{proposition}\label{propo:alphaequiv} The asymptotic morphism $\alpha$ is equivariant with respect to the family of actions defined above i.e. it verifies 
\[\lim_{t\to \infty}\lVert\alpha_t(f\hato g\cdot F)- g\cdot_t(\alpha_t(f\hato F)) \rVert=0\]
for every $g\in D_\infty$, $f\in\cS$ and $F\in\cC(\bR)$.
 \end{proposition}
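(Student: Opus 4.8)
The plan is to show that the expression inside the limit vanishes \emph{identically} in $t$, not merely asymptotically: the continuous family of actions $\cdot_t$ has been set up precisely so that the rescaling built into $\alpha$ intertwines it with the original action. Write $U_g^{(t)}$ for the unitary on $\cH$ implementing $g\cdot_t$, so that $g\cdot_t T=U_g^{(t)}T(U_g^{(t)})^*$ on $\cK(\cH)$, and recall $\alpha_t(f\hato F)=f(t^{-1}D)M_{F_t}$. Since $g\mapsto U_g^{(t)}$ is a genuine unitary representation of $D_\infty$ for each fixed $t$, it suffices to verify the three structural facts below on the generators $\rho,\sigma$ and then propagate them to all of $D_\infty$ by multiplicativity.

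First I would show that $U_g^{(t)}$ commutes with $f(t^{-1}D)$ for every $g$. Because the set of unitaries commuting with $f(t^{-1}D)$ is a group and $g\mapsto U_g^{(t)}$ is a homomorphism, it is enough to treat $\rho$ and $\sigma$. For $\rho$ the unitary $U_\rho^{(t)}$ is translation by $t$ on both summands (here $\pi(\rho)=1$), which commutes with differentiation and hence with $D$; for $\sigma$ the unitary $U_\sigma^{(t)}=:W$ is independent of $t$ and acts by $(WF)(x)=(F_2(-x),F_1(-x))$, and a direct computation gives $WD=DW$ on $\fS(\bR)$. Thus $U_g^{(t)}f(t^{-1}D)(U_g^{(t)})^*=f(t^{-1}D)$ for all $g$. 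Second, conjugating the multiplication operator, the covariance of the representation $F\mapsto M_F$ yields $U_g^{(t)}M_{F_t}(U_g^{(t)})^*=M_{g\cdot_t(F_t)}$, where $g\cdot_t$ now denotes the action on $\cC(\bR)$ induced by the rescaled action on $\bR$; this is a pointwise check using that each $\pi(g)$ is a $*$-automorphism of $\Cliff(\bR)=\C\oplus\C$ with $\pi(g)^2=\mathrm{id}$.

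Combining these two facts gives $g\cdot_t\alpha_t(f\hato F)=f(t^{-1}D)M_{g\cdot_t(F_t)}$. The final and decisive point is the identity $g\cdot_t(F_t)=(g\cdot F)_t$ of functions in $\cC(\bR)$, which amounts to the intertwining relation $t^{-1}(g\cdot_t x)=g\cdot(t^{-1}x)$ for the scaling $x\mapsto t^{-1}x$; again this need only be checked on $\rho$ (where both sides equal $t^{-1}x-1$) and $\sigma$ (where both equal $-t^{-1}x$), and then extends to all of $D_\infty$ since both $\cdot_t$ and $\cdot$ are genuine actions. With this identity, $g\cdot_t\alpha_t(f\hato F)=f(t^{-1}D)M_{(g\cdot F)_t}=\alpha_t(f\hato g\cdot F)$, so the expression inside the limit is exactly zero. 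I expect the only genuinely computational step to be the verification that $W$ commutes with the Dirac operator $D$; everything else is bookkeeping, and the conceptual crux is recognizing that the $t$-dependent rescaling in $\alpha$ and the family $\cdot_t$ were matched so as to turn asymptotic equivariance into honest equivariance.
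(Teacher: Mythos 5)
Your proposal is correct, and its overall architecture is the same as the paper's: you show the defect vanishes \emph{identically} (not just asymptotically), reduce to the generators $\rho$ and $\sigma$, and handle the two factors $f(t^{-1}D)$ and $M_{F_t}$ of $\alpha_t(f\hato F)$ separately, with the multiplication-operator part coming down to the scaling identity $g\cdot_t(F_t)=(g\cdot F)_t$ --- which the paper verifies pointwise inside its computations and you usefully isolate as the conceptual crux. The one step where you genuinely diverge is the invariance of the Dirac part under $\sigma$. The paper proves $\sigma\cdot_t f(t^{-1}D)=f(t^{-1}D)$ by reducing to the generators $u=e^{-x^2}$ and $v=xe^{-x^2}$ of $\cS$: for $u$ it identifies $u(t^{-1}D)$ as convolution by an even Gaussian, and for $v$ it factors $v(t^{-1}D)=t^{-1}Du(t^{-1}D)$ and uses $\sigma\cdot_t t^{-1}D=t^{-1}D$; this keeps the argument at the level of bounded operators with explicit kernels. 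You instead prove the commutation $WD=DW$ once and for all on the core $\fS(\bR)$ and invoke invariance of the functional calculus under unitary conjugation ($W$ preserves $\fS(\bR)$, $D$ is essentially self-adjoint there, hence $W\overline{D}W^*=\overline{D}$ and so $Wf(t^{-1}D)W^*=f(t^{-1}D)$ for all $f\in\cS$). Your route is shorter and avoids the heat-kernel computation, at the cost of leaning on this standard spectral-theoretic fact, which you should cite or state explicitly; note that the paper itself implicitly uses the same commutation ($\sigma\cdot_t t^{-1}D=t^{-1}D$) in its treatment of $v$, so your argument can be seen as streamlining the paper's rather than replacing it. Everything else in your write-up (the covariance $U^{(t)}_gM_F(U^{(t)}_g)^*=M_{g\cdot_t F}$, the propagation from generators to all of $D_\infty$ via the group structure) matches the paper's reasoning, stated a bit more systematically.
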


 \begin{proof}
 We will actually show that $\lVert\alpha_t(f\hato g\cdot F)- g\cdot_t(\alpha_t(f\hato F)) \rVert=0$ for all $g$, $f$ and $F$.
     Since $D_\infty$ is generated by $\sigma$ and $\rho$, it suffices to prove that this holds for these two elements.
     
     Let us begin with the case $g=\rho$. Fix $t\in [1,\infty)$, $f\in \cS$ and $F \in \cC(\bR)$. We will show that $\alpha_t(f\hato (\rho\cdot F))=\rho\cdot_t(\alpha_t(f\hato F))$.
     On one hand we have:
     \[\alpha_t(f\hato (\rho\cdot F))= f(t^{-1}D)M_{(\rho\cdot F)_t}\]
    On the other hand, since $D$ is translation invariant, we have:
    \begin{align*}\rho\cdot_t(\alpha_t(f\hato F))&= \rho\cdot_t(f(t^{-1}D)M_{F_t})\\
    &= (\rho\cdot_tf(t^{-1}D))(\rho\cdot_tM_{F_t})\\
    &= f(t^{-1}D) (\rho\cdot_t M_{F_t})\end{align*}
    We claim that $M_{(\rho\cdot F)_t}=\rho\cdot_tM_{F_t}$. Indeed, for $G\in \cH$ and $x\in\bR$, we have:
    \[M_{(\rho\cdot F)_t}(G)(x)= (\rho\cdot F)_t(x)G(x)=(\rho\cdot F)(t^{-1}x)G(x)= F(t^{-1}x+1)G(x)\]
    \begin{align*}
    (\rho\cdot_tM_{F_t})(G)(x)&= (\rho\cdot_t(M_{F_t}(\rho^{-1}\cdot_t G)))(x)\\
    &=(M_{F_t}(\rho^{-1}\cdot_t G))(x+t)\\
    &=F_t(x+t) (\rho^{-1}\cdot_t G)(x+t)\\
    &= F(t^{-1}x+1)G(x)
    \end{align*}
    This proves our claim and finishes the case $g=\rho$.

    Let us now consider the case $g=\sigma$. On one hand we have:
\[ \alpha_t(f\hato \sigma\cdot F)= f(t^{-1}D)M_{(\sigma\cdot F)_t}\]
On the other, we have:
    \begin{align*}
  \sigma\cdot_t(\alpha_t(f\hato F))&= \sigma\cdot_t (f(t^{-1}D)M_{F_t})\cr
  &= (\sigma\cdot_tf(t^{-1}D))(\sigma\cdot_t M_{F_t})
    \end{align*}
   We claim that $M_{(\sigma\cdot F)_t}=\sigma\cdot_tM_{F_t}$. Indeed, for $G\in \cH$ and $x\in\bR$, we have:

   \begin{align*}
 (\sigma\cdot_tM_{F_t})(G)(x)&= \sigma\cdot_t (M_{F_t}(\sigma^{-1}\cdot_t G))(x) \cr&= \pi(\sigma)\left( M_{F_t}(\sigma^{-1}\cdot_t G)(\sigma^{-1}\cdot_t x)\right)\cr
 &= \pi(\sigma)\left(F_t(-x) (\sigma^{-1}\cdot_t G)(-x)\right)\cr
 &= \pi(\sigma)\left(F(-t^{-1}x)\pi(\sigma^{-1})\left( G(x)\right) \right)\cr
 &=[\pi(\sigma)\left(F(-t^{-1}x)\right)]G(x)
 \end{align*}

 \begin{align*}
 M_{(\sigma\cdot F)_t}(G)(x)&= (\sigma\cdot F)_t(x)G(x)\cr
 &= (\sigma\cdot F)(t^{-1}x)G(x)\cr
 &= [\pi(\sigma)\left( F(-t^{-1}x)\right)]G(x)
 \end{align*}
 Let us now show that $\sigma\cdot_tf(t^{-1}D)=f(t^{-1}D)$. Since $\cS$ is generated by $u=e^{-x^2}$ and $v=xe^{-x^2}$, it suffices to consider the cases $f=u$ and $f=v$.
In the case $f=u$, $u(t^{-1}D)$ is convolution by $w=e^{-\frac{1}{4}t^{-2}\lVert x\rVert^2}$. Unravelling the definitions and using that $w$ is an even function, one shows that
$\sigma\cdot_tu(t^{-1}D)=u(t^{-1}D)$. In the case $f=v$, we have:
\begin{align*}\sigma\cdot_t v(t^{-1}D)&=\sigma\cdot_t [t^{-1}Du(t^{-1}D)] \\
& =(\sigma\cdot_t t^{-1}D)(\sigma\cdot_t u(t^{-1}D)) \\
& =(\sigma\cdot_t t^{-1}D) u(t^{-1}D)
\end{align*}
It is easily verified from the definitions that $\sigma\cdot_t t^{-1}D=t^{-1}D$.
\end{proof}

\begin{proposition}
    Let $\alpha:\cS\hato \cC(\bR)\dashrightarrow \cK(\cH)$ and $\beta:\cS\dashrightarrow\cC(\bR)$ be the asymptotic morphisms defined above. Then we have $\alpha\circ\beta=1\in E_{D_\infty}(\C, \C)$.
\end{proposition}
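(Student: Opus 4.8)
The plan is to compute the composite asymptotic morphism explicitly and to recognise it as functional calculus of a Bott--Dirac operator, whose spectral structure then pins down the class. Recall that the composition law in equivariant $E$-theory is implemented through the comultiplication $\Delta\colon\cS\to\cS\hato\cS$, which on the odd unbounded generator (the identity function $X$) is determined by $\Delta(X)=X\hato 1+1\hato X$, so that $\Delta(f)(x,y)=f(x+y)$. Since $\beta\in E_{D_\infty}(\C,\cC(\bR))$ and $\alpha\in E_{D_\infty}(\cC(\bR),\C)$, the composite $\alpha\circ\beta\in E_{D_\infty}(\C,\C)$ is represented, after the usual reparametrisation that turns a composite of asymptotic morphisms into an asymptotic morphism, by
\[\cS\xrightarrow{\ \Delta\ }\cS\hato\cS\xrightarrow{\ \mathrm{id}\hato\beta\ }\cS\hato\cC(\bR)\xrightarrow{\ \alpha\ }\cK(\cH),\]
tensored with the identity on $\cK(\cH_G)$, which I suppress. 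First I would evaluate this on $f\in\cS$: applying $\mathrm{id}\hato\beta_t$ to $\Delta(f)$ produces $f\bigl(X\hato 1+1\hato t^{-1}C\bigr)\in\cS\hato\cC(\bR)$, where $1\hato t^{-1}C$ is Clifford multiplication by the odd function $x\mapsto t^{-1}C(x)$. Because $\alpha_t$ sends the generator $X\hato 1$ to $t^{-1}D$ and sends Clifford multiplication by $C$ to the corresponding rescaled multiplication operator $M_C$, the composite becomes $f\mapsto f(B_t)$ for a rescaled Bott--Dirac operator $B_t$; up to a harmless rescaling homotopy (see below) this is the fixed Bott--Dirac operator $B=D+M_C$ on $\cH=L^2(\bR)\oplus L^2(\bR)$, with $M_C$ Clifford multiplication by $C(x)=(x,-x)$.

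The next step is the spectral analysis of $B$. Here I would compute $B^2=\bigl(-\tfrac{d^2}{dx^2}+x^2\bigr)\cdot 1+\{D,M_C\}$, using that $C(x)^2=x^2$ and that the anticommutator $\{D,M_C\}$ is a bounded self-adjoint grading term with eigenvalues $\pm 1$. Thus $B^2$ is the supersymmetric harmonic-oscillator Hamiltonian, whose kernel is one-dimensional, spanned by the Gaussian $e^{-x^2/2}$ lying in the even graded summand, while the remaining spectrum of $B$ is discrete and bounded away from $0$ (the eigenvalues are $\pm\sqrt{2n}$, $n\ge 1$). In particular $f(B)$ is compact for every $f\in\cS$, and the same analysis for $aD+bM_C$ with $a,b>0$ shows a one-dimensional kernel and a spectral gap for all positive weights, which justifies the rescaling homotopy used above to pass from $B_t$ to $B$.

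From here the identification with the unit is standard. The spectral gap lets me homotope $f\mapsto f(B)$ to $f\mapsto f(0)P_0$, where $P_0$ is the rank-one projection onto the Gaussian: the norm-continuous family $f\mapsto f(sB)$, $s\in[1,\infty]$, converges in norm to $f(0)P_0$ as $s\to\infty$, since $f\in C_0(\bR)$ and the nonzero spectrum is pushed to infinity. Finally $f\mapsto f(0)P_0$ factors as $\cS\xrightarrow{\mathrm{ev}_0}\C\xrightarrow{P_0}\cK(\cH)$; under the identification $E(\C,A)\cong K(A)$ the class of a rank-one even projection is the generator $1\in E(\C,\C)\cong K(\C)\cong\Z$. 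It remains to observe that the whole argument respects the continuous family of $D_\infty$-actions: $\alpha$ and $\beta$ are equivariant by Lemma~\ref{lem:betaequiv} and Proposition~\ref{propo:alphaequiv}, so $\alpha\circ\beta$ already determines a class in $E_{D_\infty}(\C,\C)$ via the Remark following the definition of continuous families; the Gaussian is even, hence $\sigma$-invariant, and the $\rho$-compatibility is exactly the asymptotic condition built into the family, so the homotopies above can be carried out compatibly with the $G$-actions, giving $\alpha\circ\beta=1$ in $E_{D_\infty}(\C,\C)$.

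The step I expect to be the main obstacle is the first one: justifying rigorously that applying $\alpha_t$ to $f\bigl(X\hato 1+1\hato t^{-1}C\bigr)$ really yields $f$ of the operator $B_t$. Since $t^{-1}D$ and $M_C$ do not commute, this is not a formal substitution but an asymptotic statement, requiring commutator estimates of the type $\lVert[f(t^{-1}D),M_C]\rVert\to 0$ together with control of the reparametrisation; this is precisely where the analytic content lives, and where the asymptotic (rather than strict) morphism framework is indispensable. By comparison, the harmonic-oscillator computation, the homotopy to $f(0)P_0$, and the equivariance are comparatively routine given the two preceding results.
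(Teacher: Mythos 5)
Your non-equivariant analysis is sound and in fact coincides with the paper's: the identification of the composite (via $\Delta$) with functional calculus of the Bott--Dirac operator $B=D+C$ is \cite{libro-higson-connes}*{Theorem 1.17}, which the paper cites rather than re-proves --- so the step you single out as ``the main obstacle'' is a citable result --- and the spectral facts are \cite{HK1}*{Corollary 15}. The genuine gap is in the step you call ``comparatively routine'': equivariance. The class must be computed in $E_{D_\infty}(\C,\C)$, so every homotopy you use must be compatible with the $D_\infty$-structure, and for the translation generator $\rho$ this fails. Under the family of actions $\cdot_t$, $\rho$ acts on $\cH$ by translation by $t$, so $\rho\cdot_t P_0$ is the projection onto a Gaussian centred at distance $t$ from the origin, and $\lVert \rho\cdot_t P_0-P_0\rVert=(1-e^{-t^2/2})^{1/2}\to 1$. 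Hence your terminal asymptotic morphism $f\mapsto f(0)P_0$ is \emph{not} equivariant with respect to the family (its equivariance defect tends to $|f(0)|$, not $0$), and since evaluating a family-equivariant homotopy at an endpoint yields a family-equivariant morphism, no family-equivariant homotopy from $\gamma$ to $f\mapsto f(0)P_0$ can exist. The problem is not cosmetic: even $\gamma_t(f)=f(t^{-1}B)$ itself fails to be equivariant for the family at the actual action, because $\rho\cdot_t f(t^{-1}B)$ is the conjugate of $f(t^{-1}B)$ by translation by $t$; taking $f$ supported near $0$ with $f(0)=1$ and applying both operators to the ground state of $B$ (expand its translate in the Hermite eigenbasis; the coefficient distribution is Poisson with mean $t^2/2$) shows the difference has norm at least $1-o(1)$. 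So ``the asymptotic condition built into the family'' does not absorb the $\rho$-direction; handling it is exactly what makes $D_\infty$ harder than its finite subgroups.

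What is missing is precisely the paper's central structural move, which your proposal never touches: deform the group action itself. For $s\in[0,1]$ the paper puts the action $\rho\cdot_s x=x-s$ on $\cC(\bR)$, assembles the resulting algebras into $\cC_{[0,1]}(\bR)$, checks that $\overline{\alpha}$ and $\overline{\beta}$ define classes there, and uses that all evaluation morphisms $\varepsilon_s$ have the same $E$-theory class (each is a right inverse of the inclusion of constants into $C[0,1]$) to conclude that the class of $\C\xrightarrow{\beta}\cC_s(\bR)\xrightarrow{\alpha}\C$ is independent of $s$. Only then, at $s=0$ --- where $\rho$ acts trivially, each $\beta_t$ is an honest equivariant $*$-homomorphism, and $\sigma$ genuinely fixes $B$ and the even Gaussian --- does your spectral argument run equivariantly, with $H(f,s)=f(s^{-1}B)$ an equivariant homotopy to an invariant rank-one projection. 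Without this deformation (or some substitute for it), your argument establishes $\alpha\circ\beta=1$ only in non-equivariant $E$-theory, which is not enough to feed into \cite{libro-higson-connes}*{Thm. 2.20}.
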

\begin{proof}
    Let $s\in[0,1]$ and let $\cC_s(\bR)$ be the $C^*$-algebra $\cC(\bR)$ endowed with the $D_\infty$-action $\cdot_s$. Consider the $C^*$-algebra $\cC_{[0,1]}(\bR):=C([0,1],\cC(\bR))$ with the $D_\infty$-action
    \[(g\cdot h)(s):=g\cdot_s h(s)\]
    for $g\in D_\infty$, $h\in\cC_{[0,1]}(\bR)$ and $s\in[0,1]$. Define $\cK_s(\cH)$ and $\cK_{[0,1]}(\cH)$ in a similar fashion --- using the scaled action $\cdot_s$. 

    With this notation, we have to prove that the composition
    $$\C\xrightarrow{\beta} \cC_1(\bR) \xrightarrow{\alpha} \C$$
    is the identity in $E_{D_{\infty}}(\C,\C).$

Upon tensoring with $C[0,1]$, the asymptotic morphism $\alpha:\cS\hato \cC(\bR)\dashrightarrow \cK(\cH)$ induces an asymptotic morphism $$ \overline{\alpha}: \cS\hato \cC_{[0,1]}(\bR)\dashrightarrow \cK_{[0,1]}(\cH)$$ given by $\overline{\alpha}_t(f\otimes h)(s)=\alpha_t(f\otimes h(s))$.
 With an argument similar to the one used in Proposition \ref{propo:alphaequiv} it can be shown that $\overline{\alpha}$ determines a class in $E_{D_\infty}(\cC_{[0,1]}(\bR), C[0,1])$.

Similarly, $\beta:\cS\dashrightarrow \cC(\bR)$ induces an asymptotic morphism
\[\overline{\beta}:\cS\dashrightarrow \cC_{[0,1]}(\bR)\]
upon tensoring with $C[0,1]$ and then composing with the inclusion $\cS\subseteq \cS[0,1]$ as constant functions. The same arguments used in Lemma \ref{lem:betaequiv} show that $\overline{\beta}$ is asymptotically equivariant. Consider the following commutative diagram of equivariant $E$-theory morphisms, where $\varepsilon_s$ denotes the morphism induced by the evaluation at $s$:
\[\xymatrix{\C \ar@{=}[d]\ar[r]^-{\overline{\beta}} & \cC_{[0,1]}(\bR) \ar[d]^-{\varepsilon_s}\ar[r]^-{\overline{\alpha}} & C[0,1]\ar[d]^-{\varepsilon_s} \\
\C\ar[r]^-{\beta} & \cC_s(\bR)\ar[r]^-{\alpha} & \C}\]
Note that the asymptotic morphisms $\alpha$ and $\beta$ still define classes in equivariant $E$-theory when we replace $\cC(\bR)$ with $\cC_s(\bR)$. Since $\varepsilon_s$ is an equivariant homotopy equivalence, it induces an isomorphism on equivariant $E$-theory. Moreover, the $E$-theory class of $\varepsilon_s$ does not depend on $s$ since any $\varepsilon_s$ is a right inverse to the inclusion $\C\hookrightarrow C[0,1]$ as constant functions. It follows that the $E$-theory class of the composite
    $$\C\xrightarrow{\beta} \cC_s(\bR) \xrightarrow{\alpha} \C$$
does not depend on $s$. We will show that this is the identity for $s=0$. Note that in this case, each $\beta_t$ is an equivariant $*$-homomorphism. It follows that the equivariant asymptotic morphism $\beta$ is equivariantly homotopy equivalent to the equivariant $*$-homomorphism $\beta_1$. Using this fact, it is enough to compute the following composition:
$$\C\xrightarrow{\beta_1} \cC_0(\bR) \xrightarrow{\alpha} \C $$
By \cite{libro-higson-connes}*{Theorem 1.17} this composite is asymptotically equivalent to the asymptotic morphism $\gamma:\cS\dashrightarrow \cK(\cH)$ given by $\gamma_t(f)=f(t^{-1}B)$, where $B=C+D$ is an unbounded operator on $\cH$ with domain $\fS(\bR)$. Note that $\gamma$ is asymptotically equivariant since it is asymptotically equivalent to the asymptotically equivariant $\alpha\circ \beta_1$. Since each $\gamma_t$ is a $*$-homomorphism, the asymptotic morphism $\gamma$ is homotopy equivalent to the $*$-homomorphism $\gamma_1$. We will show that the class of $\gamma_1$ in equivariant $E$-theory is the classs of the identity.

By \cite{HK1}*{Corollary 15}, there is an orthonormal eigenbasis of $B$ consisting of Schwartz-class functions. Moreover, $\ker(B)$ has dimension $1$ and the nonzero eigenvalues of $B$ are $\pm\sqrt{2n}$ for $n\geq 1$. Let $p\in\cB(\cH)$ be the projection onto the kernel of $B$ and consider the following homotopy $H:\cS\to C([0,1],\cK(\cH))$:
\[H(f, s)=\begin{cases}
    f(s^{-1}B) & \text{for $s>0$,} \\
    f(0)p & \text{for $s=0$.}
\end{cases}\]
     To prove that $H$ is continuous at $s=0$, use that $f$ vanishes at $\infty$ and that $|\lambda|\geq \sqrt{2}$ for every nonzero eigenvalue $\lambda$ of $B$. This $H$ is an homotopy between $\gamma_1$ and a projection onto a $1$-dimensional subspace, that represents the identity in $E$-theory.
\end{proof}

As a corollary we obtain:
\begin{theorem}
    The Baum-Connes conjecture with coefficients holds for the infinite dihedral group. That is, the assembly map
    $\mu_r:K^{top}(D_\infty,A)\to K(C^*_r(D_\infty,A))$
    is an isomorphism for every separable $D_\infty$-$C^*$-algebra $A$.
\end{theorem}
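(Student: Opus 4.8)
The plan is to deduce the statement directly from the criterion \cite{libro-higson-connes}*{Thm. 2.20}, which reduces the Baum--Connes conjecture to the construction of a dual pair $(\alpha,\beta)$ over a proper coefficient algebra. All of the analytic content has already been assembled in the preceding results, so at this stage the proof amounts to verifying that the hypotheses of that criterion are met and then transferring the conclusion from the full to the reduced assembly map.

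First I would record the three required ingredients. The Lemma showing that $\cC(\bR)$ is a proper $D_\infty$-$C^*$-algebra makes it an admissible choice for the coefficient algebra $B$. Lemma \ref{lem:betaequiv}, together with the standard identification of an asymptotically equivariant asymptotic morphism with a class in equivariant $E$-theory, provides $\beta\in E_{D_\infty}(\C,\cC(\bR))$, and Proposition \ref{propo:alphaequiv} provides $\alpha\in E_{D_\infty}(\cC(\bR),\C)$. Finally, the preceding Proposition establishes the crucial relation $\alpha\circ\beta=1\in E_{D_\infty}(\C,\C)$, which is exactly the compatibility condition demanded by the criterion.

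With these in hand, applying \cite{libro-higson-connes}*{Thm. 2.20} with $G=D_\infty$ and $B=\cC(\bR)$ yields immediately that the full assembly map $\mu\colon K^{top}(D_\infty,A)\to K(C^*(D_\infty,A))$ is an isomorphism for every separable $D_\infty$-$C^*$-algebra $A$. It then remains to pass to the reduced assembly map $\mu_r$. Here I would invoke the amenability of $D_\infty$: being elementary amenable, $D_\infty$ satisfies $C^*(D_\infty,A)=C^*_r(D_\infty,A)$ for every coefficient algebra $A$, so the canonical surjection $C^*(D_\infty,A)\to C^*_r(D_\infty,A)$ is an isomorphism and hence induces an isomorphism on $K$-theory. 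Since $\mu_r$ is by definition the composite of $\mu$ with this induced map, it follows that $\mu_r$ is an isomorphism as well.

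The genuine difficulty of the whole argument lies entirely upstream, in constructing $\alpha$, $\beta$ and in proving the equality $\alpha\circ\beta=1$; at this final step there is no real obstacle, only bookkeeping. The two points that I would take care to check are that the classes built from the scaled family of actions $\cdot_s$ specialize at the relevant parameter to honest elements of $E_{D_\infty}(\C,\cC(\bR))$ and $E_{D_\infty}(\cC(\bR),\C)$ matching the hypotheses of the criterion, and that amenability is being used in the precise form that identifies the full and reduced crossed products as $C^*$-algebras, so that the passage from $\mu$ to $\mu_r$ is genuinely an isomorphism on $K$-theory rather than merely a weaker $E$-theoretic statement.
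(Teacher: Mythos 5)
Your proposal is correct and is essentially the paper's own argument: the paper states this theorem as a corollary, obtained precisely by feeding the properness of $\cC(\bR)$, the classes $\beta$ and $\alpha$, and the identity $\alpha\circ\beta=1\in E_{D_\infty}(\C,\C)$ into the criterion of \cite{libro-higson-connes}*{Thm. 2.20}, and then using that $D_\infty$ is amenable (being elementary amenable) so that $C^*(D_\infty,A)=C^*_r(D_\infty,A)$ and the full and reduced assembly maps coincide. Nothing is missing from your bookkeeping.
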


By \cite{kranz}, the assembly map of the above theorem can be identified with the one defined by Davis and L\"uck in \cite{dl}. For $A=\C$, the left hand side of the Davis-L\"uck assembly was computed in \cite{SG}. We have $K_0(C_r^*(D_\infty))\cong \Z\oplus\Z\oplus \Z$ and $K_1(C_r^*(D_\infty))=0$.
\begin{bibdiv}
\begin{biblist}

\bib{bch}{article}{
   author={Baum, P.},
   author={Connes, A.},
   author={Higson, N.},
   title={Classifying space for proper actions and $K$-theory of group
   $C^\ast$-algebras},
   conference={
      title={$C^\ast$-algebras: 1943--1993},
      address={San Antonio, TX},
      date={1993},
   },
   book={
      series={Contemp. Math.},
      volume={167},
      publisher={Amer. Math. Soc., Providence, RI},
   },
   isbn={0-8218-5175-6},
   date={1994},
   pages={240--291},
   review={\MR{1292018}},
   doi={10.1090/conm/167/1292018},
}

\bib{bba}{article}{
author={C. Beguin},
auhtor={H. Bettaieb},
author={A. Valette},
title={K-theory for C*-algebras of one-relator group},
journal={K-theory},
volume={16},
pages={277-298},
year={1999}
}

\bib{dl}{article}{
author={J. F. Davis},
author={W. L\"uck},
title={Spaces over a category and assembly maps in isomorphism conjectures in K-and L-theory},
journal={K-Theory},
volume={3},
pages={201-252},
year={1998},
}
\bib{GAJV}{incollection}{
 Author={Gomez Aparicio, M.P.},
 author={Julg, P.},
 author={Valette, A.},
 Title = {The {Baum}-{Connes} conjecture: an extended survey},
 BookTitle = {Advances in noncommutative geometry. Based on the noncommutative geometry conference, Shanghai, China, March 23 -- April 7, 2017. On the occasion of Alain Connes' 70th Birthday},
 ISBN = {978-3-030-29596-7; 978-3-030-29597-4},
 Pages = {127--244},
 Year = {2019},
 Publisher = {Cham: Springer},
 zbMATH = {7217273},
 Zbl = {1447.58006}
}
\bib{Green}{article}{
author={Green, P.},
title={Equivariant $K$-theory and crossed product $C^*$-algebras},
conference={
title={Operator Alegbras and Applications},
},
book={
series={Proceedings of Symposia in Pure Mathematics},
volume={38},
publisher={American Mathematical Society},
},
date={1982},
pages={337-338},
}
\bib{GHT}{article}{
author={Guentner, E.},
author={Higson, N.},
author={Trout, J.},
title={Equivariant $E$-theory for $C^*$-algebras},
book={
series={Memoirs of the AMS},
volume={148},
publisher={American Mathematical Society},
},
date={2000},
}
\bib{libro-higson-connes}{article}{
   author={Guentner, E.},
   author={Higson, N.},
   title={Group $C^\ast$-algebras and $K$-theory},
   conference={
      title={Noncommutative geometry},
   },
   book={
      series={Lecture Notes in Math.},
      volume={1831},
      publisher={Springer, Berlin},
   },
   isbn={3-540-20357-5},
   date={2004},
   pages={137--251},
   review={\MR{2058474}},
   doi={10.1007/978-3-540-39702-1\_3},
}
\bib{HK1}{article}{
author={N. Higson},
author={G. Kasparov},
title={A Bott periodicity theorem for infinite dimensional Euclidean space},
journal={Adv. Math.},
volume={135},
year={1998},
pages={1-40}
}

\bib{HK}{article}{
author={Higson, N.},
author={Kasparov, G.},
title={E-theory and KK-theory for groups which act properly and isometrically on Hilbert space},
journal={Invent. math.},
volume={144},
year={2001},
pages={23-74}
}

\bib{hls}{article}{
   author={Higson, N.},
   author={Lafforgue, V.},
   author={Skandalis, G.},
   title={Counterexamples to the Baum-Connes conjecture},
   journal={Geom. Funct. Anal.},
   volume={12},
   date={2002},
   number={2},
   pages={330--354},
   issn={1016-443X},
   review={\MR{1911663}},
   doi={10.1007/s00039-002-8249-5},
}

\bib{Julg}{article}{
author={Julg, P.},
title={$K$-theorie equivariant et produits croises},
journal={Comptes Rendus Acad. Sci. Paris},
year={1981},
volume={292},
pages={629-632},
}

\bib{kranz}{article}{
author={Kranz, J.},
title={An identification of the Baum-Connes and Davis-Lück assembly maps},
journal={Münster J. of Math.},
year={2021},
volume={14},
pages={509-536},
}

\bib{laf}{article}{
author={V. Lafforgue},
title={La conjeture de Baum-Connes a coefficients pour les groupes hyperboliques},
journal={J. Noncommut. Geom.},
volume={6},
number={1},
pages={1-197},
year={2012}
}

\bib{SG}{article}{
 Author = {S{\'a}nchez-Garc{\'{\i}}a, R.J.},
 Title = {Equivariant {{\(K\)}}-homology for some {Coxeter} groups},
 Journal = {J. Lond. Math. Soc., II. Ser.},
Volume = {75},
 Number = {3},
 Pages = {773-790},
 Year = {2007},
DOI = {10.1112/jlms/jdm035},
zbMATH = {5196123},
 Zbl = {1175.19004}
}

\bib{dana}{book}{
   author={Williams, D. P.},
   title={Crossed products of $C{^\ast}$-algebras},
   series={Mathematical Surveys and Monographs},
   volume={134},
   publisher={American Mathematical Society, Providence, RI},
   date={2007},
   pages={xvi+528},
   isbn={978-0-8218-4242-3},
   isbn={0-8218-4242-0},
   review={\MR{2288954}},
   doi={10.1090/surv/134},
}
\end{biblist}
\end{bibdiv}

\end{document}